\DeclareMathAlphabet{\mathfrak}{U}{euf}{m}{n}
\setlist[itemize]{leftmargin=*}
\newcommand{\genu}{\alpha}
\newcommand{\genw}{u}
\newcommand{\geneidem}{e}
\let\oldproofname=\proofname
\renewcommand{\proofname}{\bfseries\textup{\oldproofname}}
\DeclareMathOperator{\Z}{\mathbb{Z}}
\DeclareMathOperator{\C}{\mathbb{C}}
\DeclareMathOperator{\Q}{\mathbb{Q}}
\DeclareMathOperator{\R}{\mathbb{R}}
\DeclareMathOperator{\Tr}{Tr}
\DeclareMathOperator{\Res}{Res}
\DeclareMathOperator{\Hom}{Hom}
\theoremstyle{plain}
\newtheorem{prop}{Proposition}[section]
\title{$C_{2^n}$-equivariant rational stable stems and characteristic classes}
\author{Nick Georgakopoulos}
\begin{document}

	\begin{abstract}In this short note, we compute the rational $C_{2^n}$-equivariant stable stems and give minimal presentations for the $RO(C_{2^n})$-graded Bredon cohomology of the equivariant classifying spaces $B_{C_{2^n}}S^1$ and $B_{C_{2^n}}\Sigma_2$ over the rational Burnside functor $A_{\Q}$. We also examine for which compact Lie groups $L$ the maximal torus inclusion $T\to L$ induces an isomorphism from $H^*_{C_{2^n}}(B_{C_{2^n}}L;A_{\Q})$ onto the fixed points of $H^*_{C_{2^n}}(B_{C_{2^n}}T;A_{\Q})$ under the Weyl group action. We prove that this holds for $L=U(m)$ and any $n,m\ge 1$ but does not hold for $L=SU(2)$ and $n>1$.
	\end{abstract}

	\maketitle{}
	
	\tableofcontents
	\section{Introduction}\label{Intro}
	
	This note is the followup to \cite{Rat1}. We start by computing the $C_{2^n}$-equivariant rational stable stems; this is done in section \ref{RationalStems}. While the method employed here is the one used in \cite{Rat1} and goes back to \cite{GM95}, the result is quite a bit more complicated to state and requires the notation set up in sections \ref{RationalMack} and \ref{EulerAndOr}.

	We then attempt to generalize the results in \cite{Rat1} to groups $C_{2^n}$. In \cite{Rat1}, we obtained minimal descriptions of the $C_2$-equivariant Chern, Pontryagin and symplectic characteristic classes associated with genuine (Bredon) cohomology using coefficients in the rational Burnside Green functor $A_{\Q}$. 
	The idea was based on the maximal torus isomorphism: if $L$ is any one of $U(m), Sp(m), SO(m), SU(m)$, $T$ is a maximal torus in $L$ and $W$ is the associated Weyl group then the inclusion $B_{C_2}T\to B_{C_2}L$ induces an isomorphism $H^{\bigstar}_{C_2}(B_{C_2}L;A_{\Q})\to H^{\bigstar}_{C_2}(B_{C_2}T;A_{\Q})^W$. We then computed $H^{\bigstar}_{C_2}(B_{C_2}T;A_{\Q})$ from $H^{\bigstar}_{C_2}(B_{C_2}S^1;A_{\Q})$ and the Kunneth formula, which reduced us to the algebraic problem of computing a minimal presentation of the fixed points $H^{\bigstar}_{C_2}(B_{C_2}T;A_{\Q})^W$.
	
	In section \ref{RationalCharacteristic}, we show that the maximal torus isomorphism holds for all groups $G=C_{2^n}$ and $L=U(m)$, but does not hold for $G=C_{2^n}$ and $L=SU(2)$ when $n>1$. We also compute the Green functor $H^{\bigstar}_G(B_GS^1;A_{\Q})$ which turns out to be algebraically quite a bit more complex compared to the $C_2$ case of \cite{Rat1}. For that reason, we do not attempt to follow the program in \cite{Rat1} and get minimal descriptions of $H^{\bigstar}_G(B_GU(m);A_{\Q})$ from the maximal torus isomorphism.

	\subsection*{Acknowledgment}We would like to thank Peter May for his numerous editing suggestions, including the idea to split off this paper from \cite{Rat1}.

	\section{Rational Mackey functors}\label{RationalMack}
	
	The rational Burnside Green functor $A_{\Q}$ over a group $G$ is defined on orbits as $G/H\mapsto A(H)\otimes \Q$ where $A(H)$ is the Burnside ring of $H$. A rational $G$-Mackey functor is by definition an $A_{\Q}$ module.\medbreak
	
	We shall use $G$-equivariant \emph{unreduced} co/homology in $A_{\Q}$ coefficients. So if $X$ is an unbased $G$-space, $H_{\bigstar}^G(X)$ is the rational $G$-Mackey functor defined on orbits as
	\begin{equation}
	H_{\bigstar}^G(X)(G/H)=[S^{\bigstar},X_+\wedge HA_{\Q}]^H
	\end{equation}
	where $HA_{\Q}$ is the equivariant Eilenberg-MacLane spectrum associated to $A_{\Q}$ and the index $\bigstar$ ranges over the real representation ring $RO(G)$. \medbreak
	
	We warn the reader of differing conventions that can be found in the literature: $H_{\bigstar}(X)$ is sometimes used to denote the \emph{reduced} homology Mackey functor (the group $G$ being implicit), with $H_{\bigstar}^G(X)$ denoting the value of this Mackey functor on the top level (i.e. the $G/G$ orbit). In this paper, $H_{\bigstar}^G(X)$ always denotes the Mackey functor and $H_{\bigstar}^G(X)(G/G)$ always denotes the top level. This convention also applies when $\bigstar=*$ ranges over the integers, in which case $H_*(X)$ denotes the nonequivariant rational homology of $X$.\medbreak
	
	All these conventions apply equally for cohomology $H^{\bigstar}_G(X)$.\medbreak
	
	The $RO(G)$-graded Mackey functor $H^G_{\bigstar}(X)$ is a module over the homology of a point  $H^{G}_{\bigstar}:=H^G_{\bigstar}(*)$. This Green functor agrees with the equivariant rational stable stems:
	 \begin{equation}
	 \pi_{\bigstar}^GS\otimes \Q=H^G_{\bigstar}
	 \end{equation}	
	
	Two facts about rational Mackey functors that we shall liberally use (\cite{GM95}):
	\begin{itemize}
		\item All rational Mackey functors are projective and injective, so we have the Kunneth formula:
		\begin{equation}
		H_{\bigstar}^G(X\times Y)=H_{\bigstar}^G(X)\boxtimes_{H_{\bigstar}^G}H_{\bigstar}^G(Y)
		\end{equation}
		and duality formula:
		\begin{equation}
		H_G^{\bigstar}(X)=\Hom_{H_{\bigstar}^G}(H_{\bigstar}^G(X),H_{\bigstar}^G)
		\end{equation}
		\item For a $G$-Mackey functor $M$ and a subgroup $H$ of $G$ consider the $W_GH$ module $M(G/H)/Im(Tr)$ where $W_GH=N_GH/H$ is the Weyl group and $Im(Tr)$ is the submodule spanned by the images of all transfer maps $Tr_K^H$ for $K\subseteq H$.  If we let $H$ vary over representatives of conjugacy classes of subgroups of $G$ then we get a sequence of $W_GH$ modules.	This functor from rational $G$-Mackey functors to sequences of $\Q[W_GH]$-modules is an equivalence of symmetric monoidal categories. 
	\end{itemize}
	
	From now on, we specialize to the case $G=C_{2^n}$.\medbreak
	
	There are two $1$-dimensional $\Q[G]$ modules up to isomorphism: $\Q$ with the trivial action and $\Q$ with action $g\cdot 1=-1$ where $g\in G$ is a generator. We shall denote the two modules by $\Q$ and $\Q_-$ respectively; every other module splits into a sum of these.

	The representatives of conjugacy classes of $G=C_{2^n}$ are $H=C_{2^i}$ for every $0\le i\le n$ thus the datum of a rational $G$-Mackey functor is equivalent to a sequence of rational $W_GH=C_{2^n}/C_{2^i}$ modules.

	We let $M_i^+, 0\le i\le n$, and $M_i^-, 0\le i<n$, be the Mackey functors corresponding to the sequences $C_{2^n}/C_{2^j}\mapsto \delta_{ij}\Q$ and $C_{2^n}/C_{2^j}\mapsto \delta_{ij}\Q_-$ respectively.
	
	For example, $M_0^+,M_0^-$ are the constant Mackey functors corresponding to the modules $\Q$ and $\Q_-$ respectively. 
	
	Observe that:
	\begin{itemize}
		\item The $M_i^{\pm}$ are self-dual.
		\item $M_i^{\pm}\boxtimes M_j^{\pm}=0$ if $i\neq j$.
		\item $M_i^{\alpha}\boxtimes M_i^{\beta}=M_i^{\alpha\beta}$ where $\alpha,\beta\in \{-1,1\}$.
	\end{itemize}
	Henceforth we shall write $M_i$ for $M_i^+$.\medbreak
	
	The notation $M_i\{a\}$ shall mean a copy of $M_i$ with a choice of generator $a\in M_i(C_{2^n}/C_{2^i})=\Q$. The element $a$ generates $M_i\{a\}$ through its transfers:
\begin{equation}M_i\{a\}(C_{2^n}/C_{2^j})=\begin{cases}\Q\{\Tr_{2^i}^{2^j}(a)\}&\textup{if, }j\ge i\\
0&\textup{if, }j< i
	\end{cases}
	\end{equation}
We analogously define $M_i^-\{a\}$.\medbreak
	
The rational Burnside $G$-Green functor is
	\begin{equation}
	A_{\Q}(C_{2^n}/C_{2^i})=\frac{\Q[x_{i,j}]}{x_{i,j}x_{i,k}=2^{i-\max(j,k)}x_{i,\min(j,k)}}
	\end{equation} 
	where $x_{i,j}=[C_{2^i}/C_{2^j}]\in A(C_{2^i})$ for $0\le j< i$. To complete the Mackey functor description, we note that:
	\begin{gather}
	\Tr_{2^i}^{2^{i+1}}(x_{i,j})=x_{i+1,j}\text{ , }\Tr_{2^i}^{2^{i+1}}(1)=x_{i+1,i}	\end{gather}
	Let
	\begin{equation}
	y_i=\begin{cases}
	1-\frac{x_{i,i-1}}2&\textup{if, }i\ge 1\\
	1&\textup{if, }i=0
	\end{cases}
	\end{equation}
	living in $A_{\Q}(C_{2^n}/C_{2^i})$. We can see that $y_i$ spans a copy of $M_i$ in $A_{\Q}$ and:
	\begin{equation}
	A_{\Q}=\oplus_{i=0}^nM_i\{y_i\}
	\end{equation}
	This is an isomorphism of Green functors, where the RHS becomes a Green functor by setting the product of elements from different summands to be $0$ and furthermore setting the $y_i$ to be idempotent ($y_i^2=y_i$).\medbreak
	
	
	\section{Euler and orientation classes}\label{EulerAndOr}
The real representation ring	$RO(C_{2^n})$ is spanned by the irreducible representations $1,\sigma, \lambda_{s,k}$ where $\sigma$ is the $1$-dimensional sign representation and $\lambda_{s,m}$ is the $2$-dimensional representation given by rotation by $2\pi s (m/2^n)$ degrees for $1\le m$ dividing $2^{n-2}$ and odd $1\le s<2^n/m$. Note that $2$-locally, $S^{\lambda_{s,m}}\simeq S^{\lambda_{1,m}}$ as $C_{2^n}$-equivariant spaces, by the $s$-power map. Therefore, to compute $H_{\bigstar}^{C_{2^n}}(X)$ it suffices to only consider $\bigstar$ in the span of $1,\sigma,\lambda_k:=\lambda_{1,2^k}$ for $0\le k\le n-2$ ($\lambda_{n-1}=2\sigma$ and $\lambda_n=2$).\medbreak
	
	We shall now define generating classes for $H_{\bigstar}^G$.
	
	We first have Euler classes $a_{\sigma}:S^0\hookrightarrow S^{\sigma}$ and $a_{\lambda_k}:S^0\hookrightarrow S^{\lambda_k}$ given by the inclusion of the north and south poles; under the Hurewicz map these classes are $a_{\sigma}\in H_{-\sigma}^G(G/G)$ and $a_{\lambda_k}\in H_{-\lambda_k}^G(G/G)$.
	
	There are also orientation classes $u_{\sigma}\in H_{1-\sigma}^G(C_{2^n}/C_{2^{n-1}})$, $u_{2\sigma}\in H^G_{2-2\sigma}(G/G)$ and $u_{\lambda_k}\in H^G_{2-\lambda_k}(G/G)$ but we shall need a small computation in order to define them.
	
	Using the cofiber sequence $C_{2^n}/C_{2^{n-1}+}\to S^0\xrightarrow{a_{\sigma}} S^{\sigma}$ we get:
	\begin{gather}
\tilde	H_0^G(S^{\sigma})=M_n\{a_{\sigma}\}\\
\tilde	H_1^G(S^{\sigma})=\oplus_{i=0}^{n-1}M_i^-
	\end{gather}
	where $\tilde H_*^G(X)$ denotes the reduced homology of a based $G$-space $X$. 
	We can further see that $\tilde H_1^G(S^{\sigma})$ is generated as a Green functor module by a class $u_{\sigma}\in \tilde H_{1-\sigma}^G(C_{2^n}/C_{2^{n-1}})$. So we get
	\begin{equation}
	\tilde H_*^G(S^{\sigma})=	M_n\{a_{\sigma}\}\oplus \oplus_{i=0}^{n-1}M_i^{-}\{y_i\Res_{2^i}^{2^{n-1}}(u_{\sigma})\}
	\end{equation}
	Using that $S^{2\sigma}=S^{\sigma}\wedge S^{\sigma}$ and the Kunneth formula, we get a class $u_{2\sigma}$ restricting to $u_{\sigma}^2$ and:
	\begin{gather}
\tilde	H_*^G(S^{2\sigma})=	M_n\{a_{\sigma}^2\}\oplus \oplus_{i=0}^{n-1}M_i\{y_i\Res_{2^i}^{2^{n}}(u_{2\sigma})\}
	\end{gather}
	
	For $0\le k\le n-2$ we have a $G$-CW decomposition $S^0\subseteq X\subseteq S^{\lambda_k}$ where $X$ consists of the points $(x_1,x_2,x_3)\in S^{\lambda_k}\subseteq \R^3$ with $x_1=0$ or $x_2=0$. From this decomposition we can see that:
	\begin{gather}
\tilde	H_0^G(S^{\lambda_k})=\oplus_{i=k+1}^{n}M_i\{y_i\Res_{2^i}^{2^n}(a_{\lambda_k})\}	\\
\tilde	H_2^G(S^{\lambda_k})=\oplus_{i=0}^{k}M_i\{y_i\Res_{2^i}^{2^n}(u_{\lambda_k})\} 
	\end{gather}
	for a class $u_{\lambda_k}\in H^G_{2-\lambda_k}(G/G)$. This also works for $k=n-1$ and $\lambda_{n-1}=2\sigma$ giving a different way of obtaining $a_{2\sigma}=a_{\sigma}^2$ and $u_{2\sigma}$.\medbreak
	
	The classes $u_{\sigma}, u_{\lambda_k}$, $0\le k\le n-1$, have not been canonically defined so far. Once we fix orientations for the spheres $S^{\lambda_k}$, the $u_{\lambda_k}$ are uniquely determined by the following two facts:
	\begin{itemize}
		\item A $G$-self-equivalence of $S^{\lambda_k}$ induces the identity map on the Mackey functor $\tilde H^G_{2}(S^{\lambda_k})$ if it does so on its bottom level $\tilde H^G_2(S^{\lambda_k})(G/e)$.
		\item An orientation of $S^{\lambda_k}$ determines a generator for $\Z=\tilde H_2(S^2;\Z)$ and consequently a generator for $\Q=\tilde H_2(S^2;\Q)=\tilde H_2^G(S^{\lambda_k})(G/e)$.
	\end{itemize}
The first fact is proven using that the Mackey functor $\tilde H^G_{2}(S^{\lambda_k})$ is generated by the transfers of $y_i\Res^{2^n}_{2^i}(u_{\lambda_k})$ where $i\le k$, so we only need to check that the induced map is the identity on $\tilde H^G_{2}(S^{\lambda_k})(G/C_{2^i})=\tilde H^{C_{2^i}}_2(S^{\lambda_k})(C_{2^i}/C_{2^i})$ which follows from the fact that $C_{2^i}$ acts trivially on $S^{\lambda_k}$ when $i\le k$.\medbreak

We can similarly uniquely determine $u_{\sigma}$ upon fixing an orientation of $S^{\sigma}$ that is compatible with the orientation for $S^{\lambda_{n-1}}=S^{2\sigma}$, meaning that $\Res^{2^n}_{2^{n-1}}(u_{2\sigma})=u_{\sigma}^2$.\medbreak

The discussion regarding orientation classes can also be performed integrally, defining $A_{\Z}$-orientation classes $u_{\sigma},u_{2\sigma},u_{\lambda_k}$ upon fixing orientations for  $S^{\sigma},S^{2\sigma}$, $S^{\lambda_k}$ as above. The $A_{\Z}$-orientation classes map to the corresponding $\Z$-orientation classes of \cite{HHR16} under the map $HA_{\Z}\to H\underline{\Z}$ where $\underline{\Z}$ is the constant Green functor corresponding to the trivial $G$-module $\Z$. 

	
	\section{Rational stable stems}\label{RationalStems}
	
In this section we shall give a presentation of the Green functor $H^G_{\bigstar}$ with generators and relations. 

The generators are elements $r_k\in H^G_{V_k}(C_{2^n}/C_{2^{i_k}})$ spanning $M_{i_k}^{\epsilon_k}$, where $\epsilon_k=+$ or $-$, such that every element of $\coprod_{H\subseteq G, \bigstar\in RO(G)}H_{\bigstar}^G(G/H)$  can be obtained from the $r_k$ using the operations of addition, multiplication, restriction, transfer and scalar multiplication (where the scalars are elements of $\coprod_{H\subseteq G}A_{\Q}(G/H)$).

The fact that the $r_k$ span $M_{i_k}^{\epsilon_k}$ gives all the additive (Mackey functor) relations, but also implies certain multiplicative relations by means of the Kunneth formula: If $i_k< i_l$ then $r_k\cdot \Res^{2^{i_l}}_{2^{i_k}}(r_l)=0$ and if $i_k=i_l$ then $r_kr_l$ spans $M_{i_k}^{\epsilon_k\epsilon_l}$.

Finally, if $r\in H^G_{V_k}(C_{2^n}/C_{2^i})$ and there exists a unique $r'\in H^G_{-V_k}(C_{2^n}/C_{2^i})$ with $rr'=y_i$, then we shall use the notation $y_i/r$ to denote $r'$. If $r,y_i/r$ are generators then we have the implicit relation $r\cdot (y_i/r)=y_i$.

\begin{prop}\label{General}The Green functor $H_{\bigstar}^G$ has a presentation whose generating set  is the union of the following four families:
	\begin{itemize}\item $y_i\Res_{2^i}^{2^{n-1}}(u_{\sigma})$ and $y_i/\Res_{2^i}^{2^{n-1}}(u_{\sigma})$ spanning $M_i^{-}$, where $0\le i<n$.\smallbreak
	\item $y_i\Res^{2^n}_{2^i}(u_{\lambda_k})$ and $y_i/\Res^{2^n}_{2^i}(u_{\lambda_k})$ spanning $M_i$, where $0\le i\le k$ and $0\le k\le n-2$.\smallbreak
	\item $y_i\Res^{2^n}_{2^i}(a_{\lambda_{k}})$ and $ y_i/\Res^{2^n}_{2^i}(a_{\lambda_{k}})$ spanning $M_i$, where $k< i\le n$ and $0\le k\le n-2$.\smallbreak
	\item $a_{\sigma}$ ($=y_na_{\sigma}$) and $y_n/a_{\sigma}$ spanning $M_n$.
	\end{itemize}
	We have implicit relations of the form $(y_i\gamma)\cdot (y_i/\gamma)=y_i$ in each of the four families. The remaining multiplicative relations can be obtained using the Kunneth formula.
	\end{prop}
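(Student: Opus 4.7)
The plan is to leverage the Kunneth formula and the equivalence of categories from section \ref{RationalMack} together with the sphere decompositions of section \ref{EulerAndOr}, reading off the four listed families directly.

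First I would record the key algebraic observation: since $M_i \boxtimes M_j = 0$ for $i \ne j$, multiplication in any rational $H^G_{\bigstar}$-module decomposes across $M_i$-summands, and within any single $M_i$-summand the bottom level $M_i(C_{2^n}/C_{2^i}) = \Q$ is a field. Thus any nonzero generator $\gamma$ of such a summand has a unique inverse $y_i/\gamma$ characterized by $\gamma \cdot (y_i/\gamma) = y_i$, which justifies the implicit relations in the proposition (the same applies with $M_i^-$ in place of $M_i$, since $M_i^- \boxtimes M_i^- = M_i$).

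Next I would read off the summand generators from the decompositions in section \ref{EulerAndOr}: $\tilde H^G_*(S^\sigma)$ contributes $M_n\{a_\sigma\}$ together with the $M_i^-\{y_i \Res_{2^i}^{2^{n-1}}(u_\sigma)\}$ for $0 \le i < n$, yielding the fourth and first families respectively, while $\tilde H^G_*(S^{\lambda_k})$ contributes the $M_i\{y_i \Res_{2^i}^{2^n}(u_{\lambda_k})\}$ for $i \le k$ and the $M_i\{y_i \Res_{2^i}^{2^n}(a_{\lambda_k})\}$ for $i > k$, yielding the second and third families respectively. The formal inverses $y_i/\gamma$ enter as soon as we invert the spheres $S^\sigma$ and $S^{\lambda_k}$, i.e., pass to negative exponents in the Kunneth product.

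For completeness, I would argue that every $\bigstar \in RO(G)$ is a $\Z$-combination $a + b\sigma + \sum c_k \lambda_k$, so $H^G_{\bigstar}$ is a Kunneth product of homologies of $\pm\sigma$- and $\pm\lambda_k$-spheres: positive exponents contribute monomials in the four families, negative exponents contribute their $y_i/\gamma$ counterparts. The vanishing $M_i \boxtimes M_j = 0$ for $i \ne j$ ensures each $M_i$-slot of $H^G_{\bigstar}$ is hit by exactly one such monomial, and the equivalence of section \ref{RationalMack} rules out any further additive relation. The main obstacle is the bookkeeping of which family contributes at which $M_i$-slot; the controlling principle is that $u_{\lambda_k}$ is invertible within the $M_i$-summand precisely for $i \le k$ (where $\lambda_k$ restricts to the trivial $C_{2^i}$-representation) while $a_{\lambda_k}$ is invertible within the $M_i$-summand precisely for $i > k$ (where $\lambda_k$ has no $C_{2^i}$-fixed direction), and the analogous dichotomy for $\sigma$ splits its generators between the top $M_n$ summand and the $M_i^-$ summands.
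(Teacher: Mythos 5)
Your proposal is correct and follows essentially the same route as the paper: both use the smash decomposition of $S^V$ into $S^{\pm\sigma}$ and $S^{\pm\lambda_k}$, the duality $\tilde H_*^G(S^{-V})=\tilde H_G^{-*}(S^V)$ to produce the $y_i/\gamma$ generators, and the Kunneth formula together with the orthogonality $M_i\boxtimes M_j=0$ for $i\ne j$ to assemble the general case. The paper merely carries out the identification of the inverse explicitly for $S^{-\sigma}$ (showing $t=y_n/a_\sigma$ via $\tilde H_0^G(S^0)$) and leaves the rest to Kunneth, whereas you spell out the same mechanism uniformly.
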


Two observations:
\begin{itemize}\item For $0\le i<n$, the square of $y_i\Res_{2^i}^{2^{n-1}}(u_{\sigma})$ is $y_i\Res_{2^i}^{2^n}(u_{2\sigma})$ and spans $M_i$.  
\item  The ring $H_{\bigstar}^G(G/G)$ has multiplicative relations:  $a_{\sigma}u_{2\sigma}=0$,  $a_{\sigma}u_{\lambda_k}=0$ and $a_{\lambda_k}u_{\lambda_s}=0$ for $s\le k$.
\end{itemize}

The Green functor presentation also gives us an additive decomposition of $H^G_{\bigstar}$ into $M_i,M_i^-$ but to state it explicitly, we'll need some notation: For each integer tuple $t=(j_0,...,j_{n-1},j'_0,...,j'_{n-1})$ let
	\begin{equation}
	k(t)=\begin{cases}
	n&\textup{ if $j_k=0$ for all $k$}\\
	\min\{k:j_k\neq 0\}&\textup{ otherwise}
	\end{cases}
	\end{equation}
	and
		\begin{equation}
	k'(t)=\begin{cases}
	-1&\textup{ if $j'_{k'}=0$ for all $k'$}\\
	\max\{k':j'_{k'}\neq 0\}&\textup{ otherwise}
	\end{cases}
	\end{equation}
and consider the representation
	\begin{gather}
V_t^{\pm}=	\sum_{k=0}^{n-2}(j_k(2-\lambda_k)-j'_k\lambda_k)+j_{n-1}(1-\sigma)-j'_{n-1}\sigma
	\end{gather} 
	where the sign $\pm$ in $V_t^{\pm}$ is $+$ if $j_{n-1}$ is even and $-$ if $j_{n-1}$ is odd.
	
	Let $T$ be the set of all tuples $t$ with $k'(t)<k(t)$; as $t$ ranges over $T$, the $V_t^{\pm}$ are pairwise non-isomorphic virtual representations. We can now state the additive description:
		\begin{prop}The $C_{2^n}$ equivariant rational stable stems are:
			\begin{equation}
			H^G_{\bigstar}=\begin{cases}
			\oplus_{k'(t)<i\le k(t)}M_i&\textup{if }\bigstar=V_t^+\text{ for }t\in T\\
			\oplus_{k'(t)<i\le k(t)}M_i^-&\textup{if }\bigstar=V_t^-\text{ for }t\in T\\
			0&\textup{otherwise }
			\end{cases}
			\end{equation}
				\end{prop}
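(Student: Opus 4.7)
My plan is to derive the additive decomposition directly from the multiplicative presentation of Proposition \ref{General} by enumerating monomials in the listed generators. The key algebraic tools are the Kunneth rules $M_i\boxtimes M_j=0$ for $i\ne j$ and $M_i^{\alpha}\boxtimes M_i^{\beta}=M_i^{\alpha\beta}$, which force every nonzero product of generators to concentrate at a single level $i$ and determine the resulting sign.

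Using this orthogonality, I would first split $H^G_{\bigstar}$ as a direct sum indexed by $i$ of contributions from level-$i$ monomials. For each fixed $i$, the active generators are those among the four families of Proposition \ref{General} whose $i$-index matches: namely $u_{\sigma}^{\pm 1}\in M_i^{-}$ when $i<n$; $u_{\lambda_k}^{\pm 1}\in M_i$ when $i\le k\le n-2$; $a_{\lambda_k}^{\pm 1}\in M_i$ when $0\le k<i$ with $k\le n-2$; and $a_{\sigma}^{\pm 1}\in M_n$ when $i=n$. A monomial at level $i$ is then encoded by an integer tuple $t=(j_0,\dots,j_{n-1},j'_0,\dots,j'_{n-1})$, where $j_k$ and $j'_k$ for $k\le n-2$ record the exponents of $u_{\lambda_k}$ and $a_{\lambda_k}$, and $j_{n-1},j'_{n-1}$ those of $u_{\sigma}$ and $a_{\sigma}$. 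Availability at level $i$ translates into the admissibility conditions $j_k\ne 0\Rightarrow i\le k$ and $j'_k\ne 0\Rightarrow i>k$, which collapse to exactly $k'(t)<i\le k(t)$ and, in particular, place $t$ in $T$.

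Additivity of the $RO(G)$-grading identifies the degree of this monomial with the $V_t^{\pm}$ of the statement, while the Kunneth rule together with the fact that $u_{\sigma}$ is the only $M_i^{-}$ generator shows the monomial spans $M_i^{+}$ when $j_{n-1}$ is even and $M_i^{-}$ when $j_{n-1}$ is odd, matching the sign convention. Each admissible pair $(t,i)$ thus contributes one copy of $M_i^{\pm}$ to $H^G_{V_t^{\pm}}$, and grouping these contributions by degree yields the stated decomposition; any $\bigstar$ not of the form $V_t^{\pm}$ admits no monomial at any level, so $H^G_{\bigstar}=0$.

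The main obstacle is the uniqueness needed to read off the right-hand side from $\bigstar$ alone: one must verify that distinct $t\in T$ yield distinct virtual representations $V_t^{\pm}$, so that the case distinction in the statement is unambiguous. I would attack this by recovering $(j_k+j'_k)$ from the $\lambda_k$-coefficient of $\bigstar$ and $(j_{n-1}+j'_{n-1})$ from the $\sigma$-coefficient, then using the ``$j'$-positions before $j$-positions'' structure encoded by $k'(t)<k(t)$ together with the dimension formula $2\sum j_k+j_{n-1}$ to pin down each $j_k$ and $j'_k$ individually.
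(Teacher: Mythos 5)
Your route is the same as the paper's: the paper gives no separate argument for this proposition but derives it from the presentation of Proposition \ref{General} together with the Kunneth formula, exactly as you propose. Your enumeration of level-$i$ monomials, the translation of availability into the condition $k'(t)<i\le k(t)$, and the sign analysis via the parity of $j_{n-1}$ are all correct, and each tuple $t\in T$ does contribute exactly $\oplus_{k'(t)<i\le k(t)}M_i^{\pm}$ in degree $V_t^{\pm}$.

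The genuine gap is the step you yourself single out as the main obstacle: the injectivity of $t\mapsto V_t^{\pm}$ on $T$ is \emph{false} for $n\ge 2$, so the recovery procedure you sketch cannot succeed. Concretely, for $n=3$ take $t=(1,-1,0,0,0,0)$ and $t'=(0,0,0,1,-1,0)$: both lie in $T$ (with $(k'(t),k(t))=(-1,0)$ and $(k'(t'),k(t'))=(1,3)$) and both satisfy $V_t^{+}=V_{t'}^{+}=\lambda_1-\lambda_0$. Your plan to pin down the $j_k,j'_k$ individually fails because the dimension $2\sum_k j_k+j_{n-1}$ is unchanged when the cut separating the $j'$-positions from the $j$-positions is moved across a block of indices whose sums $j_k+j'_k$ add to zero (here $s_0=1$, $s_1=-1$). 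The monomial count itself shows $H^{C_8}_{\lambda_1-\lambda_0}=M_0\oplus M_2\oplus M_3$, coming from $u_{\lambda_0}u_{\lambda_1}^{-1}$ at level $0$ and $a_{\lambda_0}a_{\lambda_1}^{-1}$ at levels $2$ and $3$; this agrees with the fixed-point criterion, since $\dim(\lambda_1-\lambda_0)^{C_{2^i}}=0$ precisely for $i=0,2,3$. Thus a single degree $\bigstar$ can receive contributions from several tuples, the answer can be a union of disjoint intervals of levels rather than a single interval, and the statement as written is itself incorrect: the right-hand side must be the sum over all $t\in T$ with $V_t^{\pm}=\bigstar$ (equivalently, $M_i^{\pm}$ occurs in $H^G_{\bigstar}$ if and only if $\dim\bigstar^{C_{2^i}}=0$, with the sign governed by the parity of $j_{n-1}$). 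Everything in your argument up to that point survives; only the claim that the case distinction is unambiguous has to be dropped and the statement amended accordingly.
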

			
\begin{proof}(Of Proposition \ref{General}). Any representation sphere $S^V$ is the smash product of $S^{\sigma},S^{\lambda_k}$ and their duals $S^{-\sigma},S^{-\lambda_k}$. 	 
	By duality, 
	\begin{equation}
	\tilde H_*^G(S^{-\sigma})=	\tilde H^{-*}_G(S^{\sigma})=	M_n\oplus \oplus_{i=0}^{n-1}M_i^{-}\{y_i\Res_{2^i}^{2^{n-1}}(u_{\sigma}^{-1})\}
	\end{equation}
	Let $t$ be a generator for this copy of $M_n$; then
	\begin{equation}
	\tilde H_0^G(S^0)=\tilde H_0^G(S^{\sigma})\boxtimes \tilde H_0^G(S^{-\sigma})\oplus \tilde H_1^G(S^{\sigma})\boxtimes \tilde H_{-1}^G(S^{-\sigma})
	\end{equation}
	On the left hand side we have a factor $M_n\{y_n\}$ and on the right hand side we have $M_n\{a_{\sigma}\}\boxtimes M_n\{t\}=M_n\{a_{\sigma}t\}$ so $y_n=\lambda a_{\sigma}t$ for $\lambda\in \Q^{\times}$. Thus we can pick $t=y_n/a_{\sigma}$.
	The result then follows from the Kunneth formula.
\end{proof}

	We note that taking geometric fixed points inverts all Euler classes, annihilating all orientation classes and setting $y_i=1$. Therefore:
	\begin{equation}\Phi^{C_{2^n}}(HA_{\Q})_{\bigstar}=\Q[a_{\sigma}^{\pm},a_{\lambda_k}^{\pm}]_{0\le k\le n-2}
	\end{equation}
	hence $\Phi^{C_{2^n}}HA_{\Q}=H\Q$ as nonequivariant spectra. The homotopy fixed points, homotopy orbits and Tate fixed points are computed using that $HA_{\Q}\to H\underline{\Q}$ is a nonequivariant equivalence, where $\underline \Q=M_0$ is the constant Green functor. Thus:
	\begin{equation}(HA_{\Q})_{hC_{2^n}\bigstar}=(HA_{\Q})^{hC_{2^n}}_{\bigstar}=\Q[u_{2\sigma}^{\pm},u_{\lambda_k}^{\pm}]_{0\le k\le n-2}
	\end{equation}
	and $(HA_{\Q})^{tC_{2^n}}=*$.

	\section{\texorpdfstring{$C_{2^n}$ rational characteristic classes}{C2n characteristic classes}}\label{RationalCharacteristic}

	\begin{prop}\label{C2nChern1Class} As a Green functor algebra over the homology of a point:
					\begin{equation}
		H_G^{\bigstar}(B_GS^1)=\frac{H_G^{\bigstar}[\genw,\genu_{m,j}]_{1\le m\le n, 1\le j<2^m}}{\genu_{m,j}\genu_{m',j'}=\delta_{mm'}\delta_{jj'}\genu_{m,j}\text{ , }\Res^{2^n}_{2^{m-1}}(\genu_{m,j})=0}
		\end{equation}
		for $|\genw|=2$ and $|\genu_{m,j}|=0$.
	\end{prop}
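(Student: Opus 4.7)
The plan is to compute $H^{\bigstar}_G(B_GS^1;A_{\Q})$ by combining the fixed-point geometry of $B_GS^1$ with the rational Mackey-functor splitting from Section \ref{RationalMack}. I would first identify $B_GS^1$ as the classifying $G$-space for equivariant complex line bundles, whose fixed-point set at each subgroup $C_{2^i}\subseteq G$ is $(B_GS^1)^{C_{2^i}}\simeq\coprod_{\chi\in\Hom(C_{2^i},S^1)}BS^1$: a disjoint union of $2^i$ copies of $BS^1$ indexed by characters, with fixed-point inclusions induced by restriction of characters. Passing to geometric fixed points, $\Phi^{C_{2^i}}\Sigma^{\infty}_+B_GS^1\simeq\Sigma^{\infty}_+\coprod_{\Hom(C_{2^i},S^1)}BS^1$, and since $G$ is abelian the Weyl group $W_GC_{2^i}=C_{2^{n-i}}$ acts trivially on the set of components.

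Applying the rational Mackey-functor equivalence, the level-$i$ Weyl module is the trivial $\Q^{2^i}\otimes_{\Q}\Q[\genw]$, which translates to
\[
H^{\bigstar}_G(B_GS^1;A_{\Q})\;\cong\;\bigoplus_{i=0}^{n}M_i^{\oplus 2^i}\otimes_{\Q}\Q[\genw]
\]
in every integer degree; the full $RO(G)$-graded structure then comes from $H^{\bigstar}_G$-module multiplication. I would then identify $\genw$ as the universal equivariant first Chern class in degree $2$, and $\genu_{m,j}$ (for $1\le j<2^m$) as $2^m-1$ of the $2^m$ primitive orthogonal idempotents in the $\Q^{2^m}$-factor of $M_m^{\oplus 2^m}(G/G)$; the remaining primitive idempotent is $y_m\cdot 1$ and comes for free from the coefficient Green functor $A_{\Q}=\oplus_iM_i\{y_i\}$.

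To verify the relations, $\genu_{m,j}^2=\genu_{m,j}$ and same-$m$ orthogonality follow from primitive orthogonality of the chosen idempotents in $\Q^{2^m}$; cross-$m$ orthogonality $\genu_{m,j}\genu_{m',j'}=0$ is automatic from $M_m\boxtimes M_{m'}=0$ for $m\ne m'$ stated in Section \ref{RationalMack}; and $\Res^{2^n}_{2^{m-1}}(\genu_{m,j})=0$ holds because $M_m(G/C_{2^{m-1}})=0$. A final dimension count using the Mackey splitting confirms that the presentation in the proposition has no missing relations.

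The main obstacle is justifying the multiplicity $2^i$ for the $M_i$-summand at each level: aligning the geometric count of $(B_GS^1)^{C_{2^i}}$ components with the algebraic Mackey-functor data requires carefully tracking transfer images level-by-level, and it is essential here that $G$ is abelian so that all Weyl actions on character sets are trivial (were $G$ nonabelian, the Weyl group modules would break up further, changing the count).
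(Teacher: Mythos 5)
Your proposal is correct and follows essentially the same route as the paper: compute the $C_{2^m}$-fixed points as $2^m$ copies of $BS^1$ indexed by characters, feed this into the rational splitting $H^*_G(X)=\oplus_m H^*(X^{C_{2^m}})^{C_{2^n}/C_{2^m}}$, and identify the $\genu_{m,j}$ as all but one of the orthogonal idempotents at each level, the omitted one being redundant because $\sum_{j}\genu_{m,j}$ is already the $M_m$-component of the unit coming from $A_{\Q}$. The one step you gloss over, and which the paper argues explicitly, is that the Weyl group acts trivially not only on the set of components but also on the cohomology of each component (the action on each $2$-cell $\C P^1$ is a rotation, hence degree $1$); triviality on $\pi_0$ alone does not yet give the trivial $W$-module $\Q^{2^m}\otimes\Q[\genw]$.
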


	\begin{proof}Note that
		\begin{equation}
		H^{\bigstar}_G(X)=H_G^*(X)\boxtimes_{A_{\Q}}H^G_{\bigstar}
		\end{equation}
		so it suffices to describe the integer graded cohomology.
		
		For an explicit model of $B_GS^1$ we take $\C P^{\infty}$ with a $G$ action that can be described as follows: Let $V_1,...,V_{2^n}$ be an ordering on the irreducible complex $G$-representations and set $V_{k+2^nm}=V_k$ for any $m\in \Z$, $1\le k\le 2^n$. The action of $g\in G$ on homogeneous coordinates is $g(z_1:z_2:\cdots)=(gz_1:gz_2:\cdots)$ where $g$ acts on $z_i$ as it does on $V_i$.
		
		Fix a subgroup $H=C_{2^m}$ of $G$. The fixed points under the $H$-action are:
		\begin{equation}
		(B_GS^1)^H=\coprod_{j=1}^{2^m}\C P^{\infty}
		\end{equation}
		To understand the indexing, let $W_1,...,W_{2^m}$ be an ordering on the irreducible complex $C_{2^m}$-representations; the $j$-th $\C P^{\infty}$ in $(B_GS^1)^H$ corresponds to the set of points with homogeneous coordinates $(z_1:z_2:\cdots)$ such that $z_k=0$ if $\Res_{2^m}^{2^n}(V_k)\neq W_j$.
		
		By \cite{GM95} we have:
		\begin{equation}
		H^*_G(B_GS^1)=\oplus_{m=0}^n H^*((B_GS^1)^{C_{2^m}})^{C_{2^n}/C_{2^m}}
		\end{equation}
		where $H^*(X)$ is nonequivariant cohomology in $\Q$ coefficients. The action of $C_{2^n}/C_{2^m}$ on nonequivariant cohomology is trivial since it's determined in degree $*=2$ and thus on the $2$-skeleton, which itself is the disjoint union of copies of $S^2=\C P^1$ and for each $S^2$ the action is a rotation hence has degree $1$. Thus
				\begin{equation}
		H^*_G(B_GS^1)=\oplus_{m=0}^n\oplus^{2^m}_{j=1} H^*(\C P^{\infty})=\oplus_{m=0}^n\oplus^{2^m}_{j=1}\Q[\geneidem_{m,j}]
		\end{equation}
		where each $\geneidem_{m,j}$ spans $M_m$. Set $\genu_{m,j}=\geneidem_{m,j}^0$ and $\genw=\sum_{m,j}\geneidem_{m,j}$; then
		\begin{equation}
		\sum_{j=1}^{2^m}\genu_{m,j}=\frac{\Tr_{2^m}^{2^n}(y_m)}{2^m}
		\end{equation}
		so the $\genu_{m,2^m}$ are superfluous. Thus we can take $1\le m\le n$ and $1\le j<2^m$ in the indexing for $\genu_{m,j}$.
	\end{proof}

We can similarly prove that:
	\begin{prop}We have an isomorphism of Green functor algebras over $H^{\bigstar}_G$:
	\begin{equation}
		H_G^{\bigstar}(B_G\Sigma_2)=\frac{H^{\bigstar}_G(B_GS^1)}{\genw}
	\end{equation}
	where the quotient map $H^{\bigstar}_G(B_GS^1)\to H_G^{\bigstar}(B_G\Sigma_2)$ is induced by complexification: $B_G\Sigma_2=B_GO(1)\to B_GU(1)=B_GS^1$.
\end{prop}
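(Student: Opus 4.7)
The plan is to mirror the proof of Proposition~\ref{C2nChern1Class}. I would take $B_G\Sigma_2=\R P(U_{\R})$ for a complete real $G$-universe $U_{\R}$, chosen so that complexification $B_G\Sigma_2\to B_GS^1$ is the inclusion $\R P(U_{\R})\hookrightarrow \C P(U_{\R}\otimes_{\R}\C)$, with a $G$-action compatible with that of the previous proof. A $C_{2^m}$-fixed real line in $U_{\R}$ is an isotypic summand for a $1$-dimensional real irreducible $C_{2^m}$-representation; there is only the trivial one when $m=0$ and the trivial and sign when $m\ge 1$. So $(B_G\Sigma_2)^{C_{2^m}}$ is $\R P^{\infty}$ or $\R P^{\infty}\sqcup \R P^{\infty}$, with trivial Weyl action on components by the same rotation-degree argument used before.

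Next I would invoke the \cite{GM95} rational splitting
\[
H^*_G(B_G\Sigma_2)=\bigoplus_{m=0}^n H^*\bigl((B_G\Sigma_2)^{C_{2^m}}\bigr)^{C_{2^n}/C_{2^m}},
\]
and use $H^*(\R P^{\infty};\Q)=\Q$ concentrated in degree zero, so $H_G^*(B_G\Sigma_2)$ is a Mackey functor concentrated in integer degree zero. Via $H^{\bigstar}_G(X)=H_G^*(X)\boxtimes_{A_{\Q}}H^{\bigstar}_G$ this reduces the proposition to an integer-graded statement. Complexification pulls $\genw$ back to a class in $H^2(\R P^{\infty};\Q)=0$, so it factors through $H^{\bigstar}_G(B_GS^1)/\genw$, giving the claimed Green functor algebra map. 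Surjectivity is clear once the two real idempotents at each level $m\ge 1$ are identified as the images of the two $\genu_{m,j}$ indexed by the trivial and sign complex characters of $C_{2^m}$.

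The main obstacle is injectivity. Naively the pullback also kills every $\genu_{m,j}$ attached to a non-real complex character, and since $|\genw|=2$ these cannot be multiples of $\genw$ in the integer grading alone. The resolution exploits the $RO(G)$-grading: the ideal $(\genw)\subset H^{\bigstar}_G(B_GS^1)$ contains integer-degree zero elements of the form $\genw\cdot \xi$ with $\xi\in H^{\bigstar}_G$ of virtual degree $-2$, built from inverse orientation classes and Euler classes as in Proposition~\ref{General} (for example $\xi=(y_0/\Res^{2^n}_{1} u_{\lambda_k})\cdot a_{\lambda_k}$). The key calculation is to match such products precisely with the non-real $\genu_{m,j}$ idempotents, so that modding by $\genw$ kills all of them and identifies $H^{\bigstar}_G(B_GS^1)/\genw$ with $H^{\bigstar}_G(B_G\Sigma_2)$.
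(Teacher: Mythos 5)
Your setup is exactly the ``similar'' argument the paper intends: the fixed--point computation $(B_G\Sigma_2)^{C_{2^m}}\simeq \R P^{\infty}\sqcup\R P^{\infty}$ for $m\ge 1$ (and $\R P^{\infty}$ for $m=0$), the \cite{GM95} splitting, the reduction to integer degrees via $H^{\bigstar}_G(X)=H^*_G(X)\boxtimes_{A_{\Q}}H^{\bigstar}_G$, and the surjectivity of complexification are all fine. The gap is in the injectivity step, and the resolution you sketch cannot work. First, your sample element $\xi=(y_0/\Res^{2^n}_{1}u_{\lambda_k})\cdot a_{\lambda_k}$ is zero: $y_0/\Res(u_{\lambda_k})$ lies in the $M_0$--isotypic part of $H^{\bigstar}_G$, while $a_{\lambda_k}=\sum_{i>k}y_i a_{\lambda_k}$ is supported on $\oplus_{i>k}M_i$, so the product vanishes (this is the relation $a_{\lambda_k}u_{\lambda_s}=0$, $s\le k$, recorded after Proposition \ref{General}). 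Second, and more fundamentally, you conflate virtual dimension with $RO(G)$--degree: a product $\genw\cdot\xi$ with $\xi$ of virtual dimension $-2$ but nontrivial $RO(G)$--degree (e.g.\ $-\lambda_k$) sits in degree $2-\lambda_k\neq 0$, whereas the idempotents $\genu_{m,j}$ sit in degree exactly $0\in RO(G)$. To produce a degree--$0$ element of the ideal one needs $\xi\in H^{-2}_G(B_GS^1)$, and this group is zero, since $H^*_G(B_GS^1)$ is concentrated in nonnegative even integer degrees and $H^G_j=0$ for integers $j\neq 0$. Hence the ideal $(\genw)$ has trivial component in $RO(G)$--degree $0$.

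Your own fixed--point count then shows the problem is not repairable along these lines: the top level of $H^0_G(B_GS^1)$ has dimension $\sum_{m=0}^{n}2^m=2^{n+1}-1$, while that of $H^0_G(B_G\Sigma_2)$ has dimension $2n+1$, so for $n\ge 2$ the kernel of complexification contains, at each level $m\ge 2$, the $2^m-2$ idempotents $\genu_{m,j}$ attached to characters of $C_{2^m}$ of order $>2$, and none of these lie in $(\genw)$. So either those idempotents must be adjoined to the relations (the statement as printed is then literally correct only for $n=1$, where every character of $C_2$ is real and the kernel in positive degrees is visibly $e_{m,j}=\genu_{m,j}\genw$ and its powers), or the quotient must be interpreted differently. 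In any case the isomorphism cannot be established by the argument you propose; the ``key calculation'' you defer to would have to identify degree--$0$ idempotents with elements of an ideal whose degree--$0$ part is zero.
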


The set of generators $\{\genw,\genu_{m,j}\}$ for $H^*_G(B_GS^1)$ is not minimal. Indeed, whenever we have generators $e_1,...,e_s$ with $e_ie_j=\delta_{ij}e_i$, we can replace them by a single generator defined by $e=e_1+2e_2+\cdots+se_s$:
	\begin{gather}
\frac{\Q[e_1,...,e_s]}{e_ie_j=\delta_{ij}e_i}=\frac{\Q[e]}{e(e-1)\cdots (e-s)}
\end{gather}
This isomorphism follows from the fact that any polynomial $f$ on $e_1,...,e_n$ satisfies:
\begin{equation}
f(e)=f(0)+(f(1)-f(0))e_1+\cdots +(f(s)-f(0))e_s
\end{equation}
and thus
\begin{equation}
e_i=\frac{f_i(e)}{f_i(i)}\text{ where }f_i(x)=\frac{x(x-1)\cdots (x-s)}{x-i}
\end{equation}

In this way, $H^*_G(B_GS^1)$ is generated as an $A_{\Q}$ algebra by two elements $\genw,\genu$ but now with $\genu$ satisfying some rather complicated relations. If $n=1$ i.e. $G=C_{2}$, we only have one $\genu_{m,j}$ element, namely $\genu=\genu_{1,1}$ satisfying $\genu^2=\genu$.

\begin{prop}The inclusion $B_GU(1)^m\to B_GU(m)$ induces an isomorphism of Green functor algebras over $H^{\bigstar}_G$:
	\begin{equation}
		H^{\bigstar}_G(B_GU(m))=(\otimes^mH^{\bigstar}_G(B_GU(1)))^{\Sigma_m}
	\end{equation}
\end{prop}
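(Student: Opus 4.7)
The plan is to reduce to the integer graded case, apply the fixed point splitting used in the proof of Proposition~\ref{C2nChern1Class}, and then invoke the classical (nonequivariant) Borel maximal torus theorem on each component.

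First, using $H^{\bigstar}_G(X)=H^*_G(X)\boxtimes_{A_{\Q}}H^G_{\bigstar}$ together with the Kunneth formula, and the fact that all rational Mackey functors are flat so that taking $\Sigma_m$-invariants commutes with tensoring with $H^G_{\bigstar}$ over $A_{\Q}$, it is enough to show that the inclusion induces an isomorphism of integer graded $A_{\Q}$-algebras
\begin{equation}
H_G^*(B_GU(m))\xrightarrow{\cong}H_G^*(B_GU(1)^m)^{\Sigma_m}.
\end{equation}
Applying the splitting $H^*_G(X)=\oplus_{i=0}^{n}H^*(X^{C_{2^i}})^{C_{2^n}/C_{2^i}}$ from the proof of Proposition~\ref{C2nChern1Class}, it suffices to produce compatible $\Sigma_m\times(C_{2^n}/C_{2^i})$-equivariant isomorphisms on the nonequivariant cohomology of $C_{2^i}$-fixed points for each $0\le i\le n$.

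Using the Grassmannian model $B_GU(m)=\textup{Gr}_m(\C^{\infty})$ arising from the isotypic decomposition of the $G$-action on $\C^{\infty}$ in the proof of Proposition~\ref{C2nChern1Class}, the $C_{2^i}$-fixed points split as
\begin{equation}
(B_GU(m))^{C_{2^i}}=\coprod_{a_1+\cdots+a_{2^i}=m}BU(a_1)\times\cdots\times BU(a_{2^i}),
\end{equation}
indexed by the dimensions of the intersection of an $m$-plane with each of the $2^i$ isotypic summands of $\Res^{2^n}_{2^i}\C^{\infty}$. Similarly
\begin{equation}
(B_GU(1)^m)^{C_{2^i}}=\coprod_{f\colon\{1,\ldots,m\}\to\{1,\ldots,2^i\}}BU(1)^m,
\end{equation}
and $\Sigma_m$ acts by permuting the $m$ factors; its orbits on functions $f$ are classified by the multiplicities $(a_j)$ and the stabilizer of a representative is $\Sigma_{a_1}\times\cdots\times\Sigma_{a_{2^i}}$. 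Since $C_{2^i}$ is abelian and normal in $G$, conjugation by $C_{2^n}$ acts trivially on the character group of $C_{2^i}$, so $C_{2^n}/C_{2^i}$ preserves each component above; and within each component the action is by left translation by a fixed element of the connected group $U(\infty)$, which is homotopic to the identity and so trivial on cohomology, exactly as in the proof of Proposition~\ref{C2nChern1Class}.

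On each component the desired isomorphism now reduces to the classical Borel theorem $H^*(BU(a))\cong H^*(BU(1)^a)^{\Sigma_a}$ applied factor by factor. The main obstacle is purely organizational: verifying that the natural $\Sigma_m$-orbit decomposition of the functions $f$ matches the partition decomposition of $(B_GU(m))^{C_{2^i}}$ in a way compatible with the inclusion $B_GU(1)^m\hookrightarrow B_GU(m)$ at the level of fixed points, and identifying the stabilizers $\Sigma_{a_1}\times\cdots\times\Sigma_{a_{2^i}}$ with the Weyl groups of the corresponding products of unitary groups. Once this bookkeeping is complete, summing over $i$ yields the claimed isomorphism of Green functor algebras.
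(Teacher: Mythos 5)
Your proposal is correct and follows essentially the same route as the paper: both reduce to integer grading, identify the fixed points of the Grassmannian model as a disjoint union of products of $BU(a_j)$'s indexed by partitions, match this against the $\Sigma_m$-orbits of components of the fixed points of $B_GU(1)^m$ with stabilizers $\Sigma_{a_1}\times\cdots\times\Sigma_{a_{2^i}}$, and conclude by the nonequivariant isomorphism $H^*(BU(a))\cong(\otimes^aH^*(BS^1))^{\Sigma_a}$. The only organizational difference is that the paper inducts on $n$ so as to check only the top ($G$-fixed) summand of the Greenlees--May splitting, whereas you treat all subgroups $C_{2^i}$ uniformly and verify the triviality of the residual $C_{2^n}/C_{2^i}$-action directly; both are valid.
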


\begin{proof}Let $V_i$ be the complex $G$-representation corresponding to the root of unity $e^{2\pi i/n}$. The Grassmannian model for $B_GU(m)$ uses complex $m$-dimensional subspaces of $\C^{\infty \rho_G}$; a $G$-fixed point $W$ of $B_GU(m)$ is then a $G$-representation and thus as splits as $W=\oplus_{i=1}^{2^n}k_iV_i$ for $k_i=0,1,...$ with $\sum_ik_i=m$. An automorphism of $W$ is made out automorphisms for each $k_iV_i$ hence
	\begin{equation}
		B_GU(m)^{G}=\coprod_{\sum k_i=m}\prod_{i=1}^{2^n}BU(k_i)
	\end{equation}
Following \cite{Rat1} and inducting on the $n$ in $G=C_{2^n}$, it suffices to show that 
	\begin{equation}
	H^*((B_GU(m))^G)\to H^*(\prod^m(B_GS^1)^G)
\end{equation}
	is an isomorphism after taking $\Sigma_m$ fixed points on the RHS. Spelling this out, we have:
		\begin{equation}
		\prod_{\sum k_i=m}\otimes_{i=1}^{2^n}H^*(BU(k_i))\to \prod^{2^{nm}}\otimes^m H^*(BS^1)
	\end{equation}
where the product on the right is indexed on configurations $(V_{r_1},...,V_{r_m})$. If we fix $k_i$ with $\sum_ik_i=m$ then we get
		\begin{equation}\otimes_{i=1}^{2^n}H^*(BU(k_i))\to \prod\otimes^m H^*(BS^1)
\end{equation}
where the product on the right is indexed on configurations $(V_{r_1},...,V_{r_m})$ where $k_i$ many of the $r_j$'s are equal to $i$. Taking $\Sigma_m$ fixed points is equivalent to fixing a configuration and then taking $\Sigma_{k_1}\times \cdots\times \Sigma_{k_{2^n}}$ fixed points, where each $\Sigma_{k_i}$ permutes the $k_i$ many coordinates that are $V_i$ in the configuration. Thus we are reduced to the nonequivariant isomorphism:
		\begin{equation}H^*(BU(k_i))=(\otimes^{k_i} H^*(BS^1))^{\Sigma_{k_i}}
\end{equation}
\end{proof}

For $n=1$, $G=C_2$ and $H^*_G(B_GS^1)$ has a simple enough description to allow the computation of an explicit minimal presentation of $H^*_G(B_GU(m))=H^*_G(B_GU(1)^m)^{\Sigma_m}$. Due to the greater algebraic complexity of $H^*_G(B_GS^1)$ for $n\ge 2$ ($G=C_{2^n}$), we do not attempt to generalize this and the rest of \cite{Rat1} to groups $G=C_{2^n}$ for $n\ge 2$.\medbreak

We note that the maximal torus isomorphism does not work $C_{2^n}$ equivariantly for the Lie group $L=SU(2)=Sp(1)$ and $n\ge 2$. The reason is that a $C_{2^n}$ representation in $SU(2)$ is $2, 2\sigma$ or $V_i\oplus V_{-i}$, $1\le i< 2^{n-1}$, using the notation of the proof above. Thus:
\begin{equation}
	B_GSU(2)^G=BSU(2)\coprod BSU(2)\coprod\coprod_{i=1}^{2^{n-1}-1} BS^1
\end{equation}
so $H^0(B_GSU(2)^G)$ has dimension $2^{n-1}+1$. On the other hand, the maximal torus is $U(1)\subseteq SU(2)$ with Weyl group $C_2$ and
\begin{equation}
	B_GU(1)^G=\coprod_{i=1}^{2^n} BS^1
\end{equation}
The $C_2$ action does not affect $H^0(B_GU(1)^G)$ which has dimension $2^n$. Finally, $2^{n-1}+1=2^n$ only when $n=1$.

\phantom{1}\smallbreak

\begin{small}
	\noindent  \textsc{Department of Mathematics, University of Chicago}\\
	\textit{E-mail:} \verb|nickg@math.uchicago.edu|\\
	\textit{Website:} \href{http:://math.uchicago.edu/~nickg}{math.uchicago.edu/$\sim$nickg}
\end{small}


\begin{thebibliography}{999}

\bibitem[Geo21c]{Rat1}N. Georgakopoulos, \emph{$C_2$ equivariant characteristic classes over the rational Burnside ring}, available	\href{https://math.uchicago.edu/~nickg/papers/Rational.pdf}{here}\smallbreak

\bibitem[GM95]{GM95} J.P.C. Greenlees, J.P. May, \emph{Generalized Tate Cohomology}, Memoirs of the American Mathematical Society
543 (1995).	\smallbreak	

\bibitem[HHR16]{HHR16}	M. A. Hill, M. J. Hopkins, D. C. Ravenel, \emph{On the non-existence
	of elements of Kervaire invariant one}, Annals of Mathematics, Volume 184 (2016), Issue 1
\smallbreak

\end{thebibliography}
\end{document}